\theoremstyle{plain}
\newtheorem{theorem}{Theorem}[section]
\newtheorem{corollary}[theorem]{Corollary}
\newtheorem{lemma}[theorem]{Lemma}
\theoremstyle{definition}
\newtheorem{definition}[theorem]{Definition}
\newtheorem{remark}[theorem]{Remark}
\numberwithin{equation}{section}
\newtheorem*{theorem*}{Theorem}
\def\Xint#1{\mathchoice
{\XXint\displaystyle\textstyle{#1}}
{\XXint\textstyle\scriptstyle{#1}}
{\XXint\scriptstyle\scriptscriptstyle{#1}}
{\XXint\scriptscriptstyle\scriptscriptstyle{#1}}
\!\int}
\def\XXint#1#2#3{{\setbox0=\hbox{$#1{#2#3}{\int}$}
\vcenter{\hbox{$#2#3$}}\kern-.5\wd0}}
\def\dashint{\Xint-}
\newcommand{\dy}{\, \mathrm{d}y}
\newcommand{\dx}{\, \mathrm{d}x}
\DeclareMathOperator{\diam}{diam}
\DeclareMathOperator{\supp}{supp}
\DeclareMathOperator{\BMO}{BMO}
\DeclareMathOperator{\BV}{BV}
\providecommand{\abs}[1]{ \lvert#1  \rvert}
\providecommand{\norm}[1]{ \lVert#1  \rVert}
\title[Poincar\'e inequalities]{Poincar\'e inequalities for the maximal function}
\author{Olli Saari}
\begin{document}
\begin{abstract}
We study generalized Poincar\'e inequalities. We prove that if a function satisfies a suitable inequality of Poincar\'e type, then the Hardy-Littlewood maximal function also obeys a meaningful estimate of similar form. As a by-product, we get a unified approach to proving that the maximal operator is bounded on Sobolev, Lipschitz and BMO spaces. 
\end{abstract}

\address{Olli Saari,	
	Department of Mathematics and Systems Analysis, 
	Aalto University School of Science,
	FI-00076 Aalto, 
	Finland
} \email{olli.saari@aalto.fi} 

\subjclass[2010]{Primary: 42B25, 46E35, 42B35} 

\keywords{Maximal function,
			Poincar\'e inequality, 
			Sobolev space,
			BMO}

\thanks{The author is supported by the V\"ais\"al\"a Foundation.}

\maketitle


\section{Introduction}
It was proved by Kinnunen in \cite{Kinnunen1997} that the Hardy-Littlewood maximal function gives rise to a bounded operator
\[M : W^{1,p}(\mathbb{R}^{n}) \to  W^{1,p}(\mathbb{R}^{n}) , \quad 1< p \leq \infty.\]
Similar results have later been established for maximal functions restricted to domains \cite{KL1998}, for fractional maximal functions \cite{HLT2013,HKKT2015,KS2003}, and for certain convolution type maximal functions \cite{CS2013,CFS2015}. Continuity as well as action on some function spaces in the Triebel-Lizorkin scale have been studied in \cite{Korry2002,Luiro2007,Luiro2010}. It is also well known that the maximal function behaves well on $\BMO$ and spaces of H\"older continuous functions. See \cite{BdVS1981,Buckley1999}.

Many smoothness properties of functions can be characterized using generalized Poincar\'e inequalities. For instance, $u \in L^{1}_{loc}(\mathbb{R}^{n})$ has weak derivatives in $L^{1}_{loc}(\mathbb{R}^{n})$ if and only if there is a non-negative function $g \in L^{1}_{loc}(\mathbb{R}^{n})$ satisfying 
\[ \dashint_{Q} |u- u_{Q}| \dx \lesssim \diam(Q) \dashint_{Q} g \dx \]
for all cubes $Q$. See Section 2 for definitions. Replacing the functional 
\[a(Q) = \diam(Q) \dashint_{Q} g \dx \]
by more general \textit{fractional averages}
\[a(Q) = \diam(Q)^{\alpha} \frac{\mu(Q)}{|Q|} , \quad \alpha \in [0,1], \]
we recover characterizations of other spaces of regular functions. 

The present paper is devoted to studying the action of the Hardy-Littlewood maximal operator at the level of generalized Poincar\'e inequalities. Our main theorem states that many of them persist when we pass to the maximal function. See Theorem \ref{theorem:preserved} for the precise statement. As a consequence, we can include many results about the regularity of the maximal functions in a single theorem. Our proof is very geometric, and the approach is likely to extend to slightly more general metric spaces. However, we are concerned with $\mathbb{R}^{n}$ in the present paper. The result is also interesting from the point of view of more abstract Poincar\'e inequalities; see \cite{BKM2016,BM2015,FPW1998,HK2000,JM2013,MP1998}. We do not know what are the most general functionals $a(Q)$ to which our approach applies, but a list of examples and corollaries demonstrates why the result for fractional averages is remarkable.

In addition to finding a unified approach to different function spaces, there is one more motivation for studying Poincar\'e inequalities that the maximal function satisfies. Namely, it 
is an open problem whether $u \mapsto |\nabla Mu|$ is a bounded operator from $W^{1,1}(\mathbb{R}^{n})$ to $L^{1}(\mathbb{R}^{n})$ for $n > 1$. This is Question 1 in the paper \cite{HO2004} by Haj\l{}asz and Onninen, and it has attracted quite a lot of attention; see for instance \cite{BCHP2012,CH2012,CM2015} and the references therein. Progress on this problem has been restricted to dimension one, where the question was answered by Tanaka \cite{Tanaka2002} in positive. This was actually before \cite{HO2004} was published. Later Kurka \cite{Kurka2010} extended Tanaka's theorem for the centred maximal operator in dimension one.

Even if we do not know how to bound $ |\nabla Mu|$ from $W^{1,1}(\mathbb{R}^{n})$ to $L^{1}(\mathbb{R}^{n})$, it is still true that the derivatives of $Mu$ are measurable functions in a considerably large subset of $\mathbb{R}^{n}$. This requires, of course, a carefully chosen interpretation for the generalized derivatives. Strictly speaking, a function that is not locally integrable cannot be a weak derivative. However, the distributional derivative acting outside a certain (possibly empty) set of singularities is a function in $L^{1,\infty}(\mathbb{R}^{n})$. This reproduces a result of Haj\l{}asz and Onninen \cite{HO2004} ensuring local integrability of $\nabla Mu$ under the additional hypothesis $|\nabla u| \in L \log L$.

The previous partial result can be included in the list of corollaries that follow from our main theorem. We can also prove that the same result holds for functions of bounded variation, whose generalized gradients are merely Radon measures instead of measurable functions. What prevents us from solving the question of Haj\l{}asz and Onninen is the fact that our approach is based on dominating the gradient by the maximal function, which need not be locally integrable a priori. Better results on this problem would need an analysis more delicate than what is possible to carry out in our general setting.

The structure of the present paper is as follows. Section 2 introduces the notation and preliminary results. Section 3 contains the main theorem establishing that the Poincar\'e inequalities are preserved under the action of the maximal operator. Several corollaries are also discussed. Section 4 gives an application towards the study of the endpoint Sobolev space $W^{1,1}(\mathbb{R}^{n})$. Section 5 contains a few remarks on the extension of the results to the fractional maximal function.

\vspace{0.3cm}

\noindent \textit{Acknowledgement.} The author would like to thank Carlos P\'erez for suggesting a problem that lead to this paper. The author also wishes to thank Juha Kinnunen for enlightening discussions on Sobolev spaces.

\section{Notation and preliminaries}
We use standard notation in $\mathbb{R}^{n}$, $n > 1$. The letter $C$ denotes a constant only depending on uninteresting quantities. We do not keep track of numerical constants and the dependency on the dimension $n \geq 1$. If $a \leq C b$, we write $a \lesssim b$. For a measurable set $E \subset \mathbb{R}^{n}$ we denote by $|E|$ its Lebesgue measure. By a function we mean a measurable real valued function of $n$ real variables. If $u$ is a locally integrable function, then
\[u_E = \dashint_{E} u \dx = \frac{1}{|E|} \int_E u \dx . \]
The diameter of a bounded set $E$ is denoted by
\[\diam(E) = \sup \{ \abs{x-y}: x,y \in E \}.\]
Sometimes we only use positive parts of functions. Then we denote $u^{+} = 1_{\{u > 0\}} u$. 

By a cube we always mean a cube with sides parallel to coordinate axes. In addition, our cubes are open even though sets of measure zero do not matter in our considerations. For a cube $Q$, we denote its side length by $\ell(Q)$ and its center by $c(Q)$. For a positive constant $\lambda > 0$, we understand $\lambda Q$ to be the cube with center $c(Q)$ and side length $\lambda \ell(Q)$. 

We define the non-centred Hardy-Littlewood maximal function acting on locally integrable functions to be
\[M u (x)= \sup_{Q \ni x} \dashint_{Q} \abs{u} \dx \] 
where the supremum is over all cubes with sides parallel to coordinate axes. The centred maximal function uses the supremum with the restriction that the cubes must be centred at $x$. We do not introduce separate notation for it since most of our results holds for both of them. We mention in our statements which variant we are using. We need the basic fact that $M: L^{1}(\mathbb{R}^{n}) \to L^{1,\infty}(\mathbb{R}^{n})$ is bounded where
\[\norm{u}_{L^{p,\infty}(\mathbb{R}^{n})} = \sup_{\lambda > 0} \lambda | \{x \in \mathbb{R}^{n}: \abs{u(x)} > \lambda \} |^{1/p} , \quad 0<p< \infty.\]
Also the boundedness $L^{p,\infty} \to L^{p,\infty}$ with $p > 1$ is needed. For all necessary details about $L^{p,\infty}$, we refer to \cite{Grafakos2008}. For an exponent $p \in (1,\infty)$, we denote the conjugate exponent by $p' = p/(p-1)$.

A function $u$ is said to belong to the local Sobolev space $W^{1,p}_{loc}(\mathbb{R}^{n})$ for $ p \in [1,\infty)$ if both $u$ and its distributional gradient belong to $L^{p}_{loc}(\mathbb{R}^{n})$, that is, for all compact $K$
\[\int_{K}( |u|^{p} + |\nabla u|^{p} ) \dx < \infty . \]
The global Sobolev space $W^{1,p}(\mathbb{R}^{n})$ is defined similarly, but global integrability is required instead. We can norm this space by summing the $L^{p}$ norms of the function and its gradient. Sobolev spaces can also be defined on domains; we mean open and connected subsets of $\mathbb{R}^{n}$. Weak differentiability of Sobolev functions can be characterized through Poincar\'e type inequalities.

\begin{definition}
\label{def:poincare}
Let $u \in L^{1}_{loc}(\mathbb{R}^{n})$ and let $\mu$ be a locally finite positive Borel measure. We say that $u$ and $\mu$ satisfy Poincar\'e inequality with $\alpha$ if for all cubes $Q$ it holds
\[\left(\dashint_{Q}|u-u_Q |^{q} \dx \right)^{1/q}\leq \diam(Q)^{\alpha} \frac{\mu(Q)}{|Q|} .\]
Here $q\geq 1$ and $\alpha \geq 0$.
\end{definition}

This general form of a Poincar\'e type inequality can be used to characterize various function spaces. As we show that it is preserved under the action of the Hardy-Littlewood maximal operator, we get a unified approach to studying the maximal function on such spaces. We list some examples.
\begin{itemize}
\item Let $p \in [1,\infty]$. Then $u \in W^{1,p}(\mathbb{R}^{n})$ if and only if $u \in L^{p}(\mathbb{R}^{n})$ and it satisfies the inequality of Definition \ref{def:poincare} with $\alpha = 1$, $q = 1$ and $\mu \in L^{p}(\mathbb{R}^{n})$. See \cite{Hajlasz2003} and \cite{EG1991}. 
\item $u \in BV$ if and only if $u \in L^{1}(\mathbb{R}^{n})$ and it satisfies the inequality of Definition \ref{def:poincare} with $\alpha = 1$, $q = 1$ and $\mu$ a Radon measure. See for instance \cite{Miranda2003} and \cite{EG1991}. 
\item $u \in \Lambda(\alpha)$ (the space of $\alpha$-H\"older continuous functions) if and only if $u \in L_{loc}^{1}(\mathbb{R}^{n})$ and it satisfies the inequality of Definition \ref{def:poincare} with $\alpha \in (0,1)$, $q = 1$ and $\mu \in L^{\infty}(\mathbb{R}^{n})$. See \cite{Campanato1963} and \cite{Meyers1964}.
\item $u \in \BMO$ if and only if $u \in L_{loc}^{1}(\mathbb{R}^{n})$ and it satisfies the inequality of Definition \ref{def:poincare} with $\alpha = 0$, $q = 1$ and $\mu \in L^{\infty}(\mathbb{R}^{n})$. This is just the definition of $\BMO$.
\end{itemize} 
We emphasize that Poincar\'e inequalities encode the local behaviour. For example, local Sobolev spaces are nested and decreasing with $p$. We can always work with the Poincar\'e to find out whether the function has a weak derivative, but it is another story if it belongs to the correct $L^{p}$ space, locally or globally. Moreover, the modulus of the weak gradient of a Sobolev function $u$ is, up to a dimensional constant, the minimal $\mu$ that can be inserted to the Poincar\'e inequality. This is proved along with the characterization in \cite{Hajlasz2003}.

All of the previous examples share the common feature of exhibiting a self-improving property. Namely, if the inequalities above hold with $q = 1$, then they also hold with some $q > 1$. The classical instances of this phenomenon are known as Sobolev-Poincar\'e and John-Nirenberg inequalities. However, similar phenomena also occur in the very general setting of $\mu$ being a locally finite Borel measure and beyond. This is sometimes called Franchi-P\'erez-Wheeden self-improvement.

\begin{lemma}[Franchi, P\'erez, and Wheeden \cite{FPW1998}]
\label{lemma:fpw}
Suppose that $u$, $\mu$ and $\alpha \in (0,1]$ satisfy Definition \ref{def:poincare} with $q = 1$. Then there exists $C > 0$ not depending on $u$ such that for all cubes $Q$ it holds
\[\frac{\norm{ 1_{Q}(u-u_{Q}) }_{L^{q,\infty}(\mathbb{R}^{n})}}{|Q|^{1/q}} \leq C \diam(Q)^{\alpha} \frac{\mu(Q)}{|Q|}\]
where $q = n/(n- \alpha)$.
\end{lemma}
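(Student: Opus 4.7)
The plan is to derive the weak-type improvement by coupling a dyadic Calder\'on--Zygmund stopping time with a good-$\lambda$ iteration, where the self-improvement is powered by a packing estimate tailored to the fractional functional $a(Q) := \diam(Q)^{\alpha}\mu(Q)/|Q|$ at the critical exponent $q = n/(n-\alpha)$.

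The core auxiliary fact is a packing lemma: for any pairwise disjoint cubes $\{Q_j\}$ inside a cube $Q$,
\[
\sum_j a(Q_j)^q|Q_j| \lesssim a(Q)^q|Q|.
\]
This is a dimensional identity. Since $\diam(Q')^{\alpha q} \asymp |Q'|^{q-1}$ precisely at $q = n/(n-\alpha)$, one has $a(Q')^q|Q'| \asymp \mu(Q')^q$; the elementary inequality $\sum_j t_j^q \le (\sum_j t_j)^q$, valid for $t_j \ge 0$ and $q \ge 1$, together with monotonicity of $\mu$ over the disjoint union inside $Q$, then reduces the left side to $\mu(Q)^q \asymp a(Q)^q|Q|$.

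With this in hand I would fix a dyadic grid in $Q$ and run Calder\'on--Zygmund stopping for $|u-u_Q|$ at level $\lambda$, producing maximal stopping cubes $\{Q_j\}$ with $\lambda < \dashint_{Q_j}|u-u_Q|\dx \le 2^n\lambda$ whose union $\Omega_\lambda$ almost covers $\{|u-u_Q|>\lambda\}$. Pick $\gamma > 2^n$ (for instance $\gamma = 2^{n+2}$) and a small $\varepsilon > 0$ to be chosen, and split the stopping cubes according to whether (i) $a(Q_j) > \varepsilon\lambda$, in which case the packing lemma directly yields $\sum_{(\mathrm i)} |Q_j| \lesssim (a(Q)/\varepsilon\lambda)^q|Q|$; or (ii) $a(Q_j) \le \varepsilon\lambda$. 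For a cube in case (ii), the stopping gives $|u_{Q_j} - u_Q| \le 2^n\lambda$, and hence $\Omega_{\gamma\lambda}\cap Q_j$ is contained in the set where the dyadic maximal function of $u - u_{Q_j}$ on $Q_j$ exceeds $(\gamma - 2^n)\lambda$; weak $L^1$ for that maximal function together with Poincar\'e gives $|\Omega_{\gamma\lambda}\cap Q_j| \le \varepsilon|Q_j|/(\gamma - 2^n)$.

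Summing over $j$ produces the good-$\lambda$ inequality
\[
|\Omega_{\gamma\lambda}| \le \frac{\varepsilon}{\gamma - 2^n}|\Omega_\lambda| + C\Bigl(\frac{a(Q)}{\varepsilon\lambda}\Bigr)^q|Q|,
\]
and selecting $\varepsilon$ so small that $\theta := \varepsilon\gamma^q/(\gamma - 2^n) < 1$ makes $\lambda^q|\Omega_\lambda|$ satisfy a contractive recursion under $\lambda \mapsto \gamma\lambda$. Iterating from a base scale $\lambda_0 \asymp a(Q)$, at which the trivial bound $|\Omega_{\lambda_0}| \le |Q|$ is already of the correct order, yields $|\Omega_\lambda| \lesssim a(Q)^q|Q|/\lambda^q$, which is the desired weak $L^q$ estimate. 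The step I expect to require the most care is calibrating $\varepsilon$ and $\gamma$ against the exponent $q$ so that the packing lemma supplies the correct decay on one side while the iteration remains strictly contractive on the other; this is precisely where the criticality $q = n/(n-\alpha)$ plays the decisive role.
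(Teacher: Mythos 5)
Your argument is correct, and it is a genuinely different route from the paper: the paper does not prove this lemma at all, but simply cites Theorem 2.3 of Franchi--P\'erez--Wheeden (with their Example 2.2 supplying the exponent $q=n/(n-\alpha)$ and their Remark 2.6 removing the dilation of the cube on the right-hand side). What you have done is reconstruct that machinery, specialized to $\mathbb{R}^{n}$, cubes, and the fractional functional $a(Q)=\diam(Q)^{\alpha}\mu(Q)/|Q|$: your packing estimate $\sum_j a(Q_j)^{q}|Q_j|\lesssim a(Q)^{q}|Q|$ is exactly the $D_r$-type condition that drives the FPW self-improvement, and it is verified correctly, since $n(q-1)=\alpha q$ gives $a(Q')^{q}|Q'|\asymp\mu(Q')^{q}$ and then disjointness plus $\sum_j t_j^{q}\le(\sum_j t_j)^{q}$ finishes it. The Calder\'on--Zygmund stopping with the good-$\lambda$ split is also sound: for a case (ii) cube the stopping inequality gives $|u_{Q_j}-u_Q|\le 2^{n}\lambda$, any level-$\gamma\lambda$ stopping cube meeting $Q_j$ is contained in $Q_j$ by maximality, and the local dyadic weak $(1,1)$ bound together with the $q=1$ Poincar\'e on $Q_j$ yields $|\Omega_{\gamma\lambda}\cap Q_j|\le\varepsilon|Q_j|/(\gamma-2^{n})$, so your good-$\lambda$ inequality and the contraction $\theta=\varepsilon\gamma^{q}/(\gamma-2^{n})<1$ follow; the only routine points left implicit are the a priori finiteness needed to close the iteration (take the supremum of $\lambda^{q}|\Omega_\lambda|$ over $\lambda_0\le\lambda\le N$, which is finite since $|\Omega_\lambda|\le|Q|$, and let $N\to\infty$) and the trivial range $\lambda\le\lambda_0\asymp a(Q)$. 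In exchange for being longer, your proof makes the lemma self-contained with constants depending only on $n$ and $\alpha$; the paper's citation buys brevity and the greater generality of FPW (spaces of homogeneous type and more general functionals), which the author does not need here.
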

\begin{proof}
This is just a special case of Theorem 2.3 in \cite{FPW1998}. The correct value of $q$ is given in Example 2.2 of that paper. The fact that we are in $\mathbb{R}^{n}$ with cubes saves us from dilating the cube in the right side, as pointed out in Remark 2.6 of \cite{FPW1998}. The constant $C$ does not depend on $u$ because the possible constants in Definition \ref{def:poincare} are hidden in the measure $\mu$. 
\end{proof}

\section{Poincar\'e inequality and maximal function}
The fact that the maximal function preserves some Poincar\'e inequalities is a consequence of two phenomena. The first one is the self-improvement of local integrability. Concrete instances of this phenomenon are the John-Nirenberg inequality and the Sobolev-Poincar\'e inequality. Self-improvement of local integrability is important when working at small scales. So-called chaining arguments and the geometry of the underlying space become dominating at large scales. Similar ideas have appeared separately in the literature, but the way in which we interpret and combine them is new. See \cite{BdVS1981,HMV2014,macmanus2002,MS2016}.

In some sense, it is possible to work with Sobolev spaces $W^{1,p}$ so that there is no need worry about what we call the local part. This is based on pointwise characterizations. See \cite{macmanus2002} and \cite{HO2004}. Similarly, the role of the non-local part is almost negligible in the case of $\BMO$. Compare to \cite{BdVS1981} and the slightly more complicated setting of \cite{Saari2016}. The following theorem finds a connection between these extreme cases at the level of generalized Poincar\'e inequalities. The assumptions of the theorem are motivated by the list of examples given after Definition \ref{def:poincare}. 

\begin{theorem}
\label{theorem:preserved}
Denote by $M$ the Hardy-Littlewood maximal function, centred or non-centred. Let $u \in L^{1}_{loc}(\mathbb{R}^{n})$ be a positive function such that $Mu \in L^{1}_{loc}(\mathbb{R}^{n})$. Suppose that there is a constant $C > 0$, uniform for all cubes $Q$, such that
\begin{equation}
\label{eq:poincare_general}
\dashint_{Q} \abs{u -u_{Q}} \dx \leq C \diam(Q)^{\alpha}  \frac{\mu(Q)}{|Q|}  
\end{equation}
where we freeze $\alpha$ and $\mu$ to one of the following two alternatives. Either
\begin{itemize}
\item $\alpha = 0$ and $\mu$ equals the Lebesgue measure, or
\item $\alpha \in (0,1]$ and $\mu$ is a locally finite positive Borel measure.
\end{itemize}
Then there is a constant $C$ comparable to the constant appearing in \eqref{eq:poincare_general} such that
\[ \dashint_{Q} \abs{Mu -(Mu)_{Q}} \dx \leq C \diam(Q)^{\alpha}  \inf _{z \in Q} M\mu (z)  \]
for all $Q$.
\end{theorem}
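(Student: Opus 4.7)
The plan is to split the supremum defining $Mu$ at scale $\ell(Q)$. Writing
\[M^{s}u(x) = \sup\{\,\dashint_R u \,:\, R \ni x,\; \ell(R) \leq \ell(Q)\,\}, \qquad M^{\ell}u(x) = \sup\{\,\dashint_R u \,:\, R \ni x,\; \ell(R) > \ell(Q)\,\},\]
we have $Mu = \max(M^{s}u, M^{\ell}u)$, and the pointwise inequality $|\max(a,b) - \max(c,d)| \leq |a-c| + |b-d|$ bounds $|Mu(x) - Mu(y)|$ by the sum of the corresponding differences for the two pieces. Integrating over $x,y \in Q$ shows that the oscillation of $Mu$ on $Q$ is dominated by the oscillations of $M^{s}u$ and $M^{\ell}u$, and I will estimate each separately.

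First I would handle the small-scale part. Any competing cube lies in a fixed dimensional dilate $kQ$, so $|M^{s}u(x) - u_{kQ}| \leq M(|u-u_{kQ}|\mathbf{1}_{kQ})(x)$. For $\alpha \in (0,1]$, Lemma~\ref{lemma:fpw} places $\mathbf{1}_{kQ}(u-u_{kQ})$ in $L^{q,\infty}$ with $q = n/(n-\alpha) > 1$ and quasi-norm at most $C|kQ|^{1/q}\diam(Q)^{\alpha}\mu(kQ)/|kQ|$. Boundedness of $M$ on $L^{q,\infty}$ together with Kolmogorov's inequality then yields
\[\dashint_Q |M^{s}u - u_{kQ}|\dx \lesssim \diam(Q)^{\alpha}\frac{\mu(kQ)}{|kQ|} \leq \diam(Q)^{\alpha}\inf_{z\in Q} M\mu(z),\]
the last step because $z \in kQ$ for every $z \in Q$. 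The BMO case $\alpha=0$ is handled identically using John--Nirenberg in place of Lemma~\ref{lemma:fpw}.

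Next I would treat the large-scale part, which is where the Poincar\'e hypothesis is really engaged. Fix $x,y \in Q$ and $R \ni x$ with $\ell(R) > \ell(Q)$, and set $R' := R + (y-x)$. Then $R' \ni y$ with $\ell(R') = \ell(R) > \ell(Q)$, so $\dashint_{R'} u \leq M^{\ell}u(y)$, and a change of variables gives
\[\Bigl|\dashint_R u - \dashint_{R'} u\Bigr| \leq \dashint_R |u(z)-u(z+(y-x))|\dz.\]
The essential technical input is the translation estimate
\[\int_R |u(z)-u(z+h)|\dz \leq C|h|^{\alpha}\mu(\widetilde R), \qquad |h| \lesssim \ell(R),\]
for a cube $\widetilde R \supset R \cup (R+h)$ with $|\widetilde R| \lesssim |R|$. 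I would prove it by partitioning $R$ into sub-cubes $P_i$ of side $\approx|h|$, applying the hypothesis to a cube $P_i^{*} \supset P_i \cup (P_i + h)$ of side $\approx|h|$, and summing over $i$ using the bounded overlap of the $P_i^{*}$. Enlarging $\widetilde R$ so that it also contains $Q$ (possible at the cost only of a dimensional constant, since $Q \subset CR$ when $R \ni x \in Q$ and $\ell(R) > \ell(Q)$) allows me to bound $\mu(\widetilde R)/|\widetilde R|$ by $\inf_{z\in Q} M\mu(z)$, and together with $|y-x| \leq \diam(Q)$ this delivers
\[\Bigl|\dashint_R u - \dashint_{R'} u\Bigr| \lesssim \diam(Q)^{\alpha}\inf_{z\in Q}M\mu(z).\]
Taking the supremum over $R$ and symmetrising in $x,y$ gives the desired oscillation bound for $M^{\ell}u$.

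The hard part is the translation estimate. Applying the Poincar\'e hypothesis to a single cube of side $\ell(R)$ produces $\diam(R)^{\alpha}$ on the right-hand side, which is useless when $\ell(R) \gg \ell(Q)$. The partition argument replaces $\diam(R)^{\alpha}$ by $|h|^{\alpha}$ by invoking the hypothesis at the smaller scale $|h|$; this is precisely the step that transfers the smoothness of $u$ expressed by the Poincar\'e inequality to smoothness of its cube averages under translations of length $|h|$, and it is the engine that distinguishes this argument from the one-scale estimates used in earlier treatments of the Sobolev and BMO cases.
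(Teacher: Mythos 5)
Your proposal is correct, and while your small-scale estimate coincides with the paper's local part (Lemma \ref{lemma:fpw}, respectively John--Nirenberg when $\alpha=0$, followed by $L^{q,\infty}$-boundedness of $M$ and Kolmogorov's inequality), your large-scale argument takes a genuinely different route. The paper also compares a big cube $Q_1$ with a shifted or concentric cube $Q_2$, but it then estimates $(u_{Q_1}-u_{Q_2})^{+}$ by integrating $|u-u_{Q_*}|$ over an annulus containing the symmetric difference and runs a chaining argument over a Whitney decomposition of an enclosing cube, telescoping averages along chains of Whitney cubes and reorganizing the double sum to reach $r_0^{\alpha}\mu(Q')$. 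You instead change variables and reduce everything to the translation estimate $\int_R|u(z)-u(z+h)|\,\mathrm{d}z\lesssim |h|^{\alpha}\mu(\widetilde R)$ for $|h|\lesssim\ell(R)$, proved by tiling $R$ into cubes of side comparable to $|h|$, applying the hypothesis once at that single scale, and summing with bounded overlap; this $L^{1}$-modulus-of-continuity argument avoids annuli and chains altogether and is arguably simpler. As for what each approach buys: your argument treats the centred and non-centred operators uniformly (the translate of a cube centred at $x$ by $y-x$ is centred at $y$), whereas the paper needs a separate construction in the centred case; on the other hand, the paper's centred construction keeps the comparison cube and the annulus inside $Q_1$, which is precisely what allows the result to localize to the centred maximal operator on a subdomain (Corollary \ref{thm:domain}) and what makes the chaining viewpoint plausible in metric spaces, while your translated cube $R+(y-x)$ may leave a domain, so that extension would not follow directly from your proof. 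One cosmetic point: the two-sided bound $|M^{s}u-u_{kQ}|\le M(\mathbf{1}_{kQ}|u-u_{kQ}|)$ holds only almost everywhere, since the lower bound uses $M^{s}u\ge u$ at Lebesgue points; this is harmless, or you can follow the paper and work with positive parts only.
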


\begin{proof}
Take any cube $Q_0 = Q(x_0,r_0) \subset \mathbb{R}^{n}$ with side length $r_0$. Denote
\begin{align*}
U_1 (x) &= \sup \{u_Q :Q \in \mathcal{Q}(x),  \ell(Q) \leq r_0  \} \\
U_2 (x) &= \sup \{u_Q :Q \in \mathcal{Q}(x),  \ell(Q) > r_0 \} \\
U(x) 	&= \max (U_1(x),U_2(x)) .
\end{align*}
If $M$ is the centred maximal operator, then $\mathcal{Q}(x)$ is the collection of cubes centred at $x$. If $M$ is the non-centred maximal operator, then $Q \in \mathcal{Q}(x)$ whenever $x \in Q$. Our aim is to show that $U = Mu$ satisfies a Poincar\'e inequality. We reduce the task to controlling the quantity
\begin{align}
\label{eq:todistus1}
\dashint_{Q_0} \abs{U-U_{Q_0}} \dx &= 2 \dashint_{Q_0} (U-U_{Q_0})^{+} \dx \nonumber \\
 	&= \frac{2}{\abs{Q_0}}\int_{Q_0 \cap \{U_2 \leq U_1\}}(U_1 - U_{Q_0})^{+} \dx \nonumber \\
 	& \qquad + \frac{2}{\abs{Q_0}} \int_{Q_0 \cap \{U_2 > U_1\}}(U_2 - U_{Q_0})^{+} \dx   \nonumber \\
 	&= \frac{2}{\abs{Q_0}} (I + II) 
\end{align}
in two parts. The first term corresponds to local behaviour.

\subsection{Local part}
\label{sec:lopa}
Take the cube $3Q_0$ that is centred at $x_0$ and that has side length $3 r_0$. Every cube $Q$ with $Q \cap Q_0 \neq \varnothing$ and $\ell(Q) \leq r_0$ is contained in $3Q_0$. Consequently, we can regard $U_1$ as a maximal function localized in $3Q_0$, that is,
\begin{align*}
U_1(x) - u_{3Q_0} \leq M [1_{3Q_0}( u  - u_{3Q_0}  ) ](x)
\end{align*}
for every $x \in Q_0$. Remember that $u \geq 0$. Here $M$ is the non-centred Hardy-Littlewood maximal function. It is known to be bounded on $L^{q,\infty}$ (see for instance \cite{Grafakos2008}) so
\begin{align}
\norm{ 1_{Q_0} (U_1 - u_{3Q_0})}_{L^{q,\infty}(\mathbb{R}^{n})} 
& 	\leq \norm{ M( 1_{3Q_0}( u  - u_{3Q_0}  )) }_{L^{q,\infty}(\mathbb{R}^{n})}	\nonumber \\
& 	\lesssim \norm{ 1_{3Q_0}( u  - u_{3Q_0}  ) }_{L^{q,\infty}(\mathbb{R}^{n})} 	\nonumber \\
\label{eq:ref_fpw}
& 	\lesssim \diam(Q_0)^{\alpha} \frac{\mu(3 Q_0)}{|3 Q_0|^{1/q'}}.
\end{align}
where $q =  \frac{ n }{ n-\alpha }  > 1$ if $\alpha > 0$ . Inequality \eqref{eq:ref_fpw} comes from an application of Lemma \ref{lemma:fpw}. If $\alpha = 0$ and $\mu = 1$, then we have the same bound by the John-Nirenberg lemma. Hence
\begin{align}
\label{eq:todistus2}
\frac{I}{|Q_0|}	&	\leq \dashint_{Q_0} (U_1 - U_{Q_0})^{+} \dx \leq \dashint_{Q_0} (U_1 - u_{3Q_0})^{+} \dx +  (u_{3Q_0} - u_{Q_0})^{+}	\nonumber \\
& \lesssim	|Q_0|^{-1/q} \norm{ 1_{Q_0} (U_1 - u_{3Q_0})}_{L^{q,\infty}(\mathbb{R}^{n})}  + \dashint_{3Q_0} | u-   u_{3Q_0} | \dx \nonumber \\
& 	\lesssim \diam(Q_0)^{\alpha} \frac{\mu(3 Q_0)}{|3 Q_0|}
\end{align}
where we used the simple fact that
\[   \frac{1}{|E|^{1/q'}} \int_{E} |f| \dx \lesssim_q  \norm{f }_{L^{q, \infty}(\mathbb{R}^{n})}  \]
for all measurable sets $E$ that have finite measure.

Inequality \eqref{eq:todistus2} is the desired bound for the local part. Note that the proof is very abstract. It works for both centred and non-centred maximal functions as such. Transition to a maximal function using balls does not make any difference either.

\subsection{Preparation for the non-local part}
\label{sec:nolopa}
In the non-local part, we have to implement a construction that differs a bit for the centred and non-centred maximal functions. This is the only part of the proof where the two cases diverge, and it is packed into this subsection whose output is uniform for both cases. Our aim is to establish a pointwise bound for
\[(U_2(x) - U(y))^{+}, \quad x,y \in Q_0.\]
We proceed by proving a uniform bound in the case where $U_2(x)$ is replaced by an average over any admissible cube and $U(y)$ is estimated from below by a suitable cube. We start with the non-centred case. The construction is illustrated in Figure \ref{fig1:non-local}.

\textit{Non-centred case.}
Take any $Q_1$ such that $\ell (Q_1) \geq r_0$ and $Q_1 \cap Q_0 \neq \varnothing$. Let $Q_2 = Q_1 + h$ where $h \in \mathbb{R}^{n}$ is such that $Q_2 \supset Q_0$ and $|Q_1 \cap Q_2|$ is maximized. Now
\begin{align*}
(u_{Q_1} - u_{Q_2})^{+}  &= \frac{1}{|Q_1|}\left(\int_{Q_1 \setminus Q_2} u \dx  - \int_{Q_2 \setminus Q_1} u \dx \right)^{+} \\
& \leq  \frac{1}{|Q_1|}  \left \lvert \int_{A} \varphi u \dx \right \rvert
\end{align*} 
where $\varphi = 1_{Q_2 \setminus Q_1} - 1_{Q_1 \setminus Q_2}$ obviously has mean zero and $A$ is an annulus 
\[A = Q(x',\ell') \setminus Q(x'',\ell'') = Q' \setminus Q'' \supset (Q_2 \setminus Q_1) \cup (Q_1 \setminus Q_2).\] 
We can choose the annulus such that $\ell' - \ell'' = 2 r_0$ and $\ell' \leq 10 \ell (Q_1)$. Consequently,
\[\frac{\abs{A}}{\abs{Q_1}} \leq C \frac{r_0}{\ell (Q_1)}.\]
Note also that by the fact that $\varphi$ has mean zero in $A$, we have that 
\begin{align}
\label{eq:case1}
\left \lvert \int_{A} \varphi u \dx \right \rvert &\leq \int_{A} \abs{u-u_{Q_{*}} } \dx 
\end{align}
for any constant $u_{Q_*}$. We choose $Q_*$ to be the central cube obtained by dividing each side of $Q'$ into three equal parts. The data $A,Q_*,Q_1$ will be needed in completing the proof. 
Before proceeding to that, we construct these objects in the centred setting.

\begin{figure}
\includegraphics[scale=0.8]{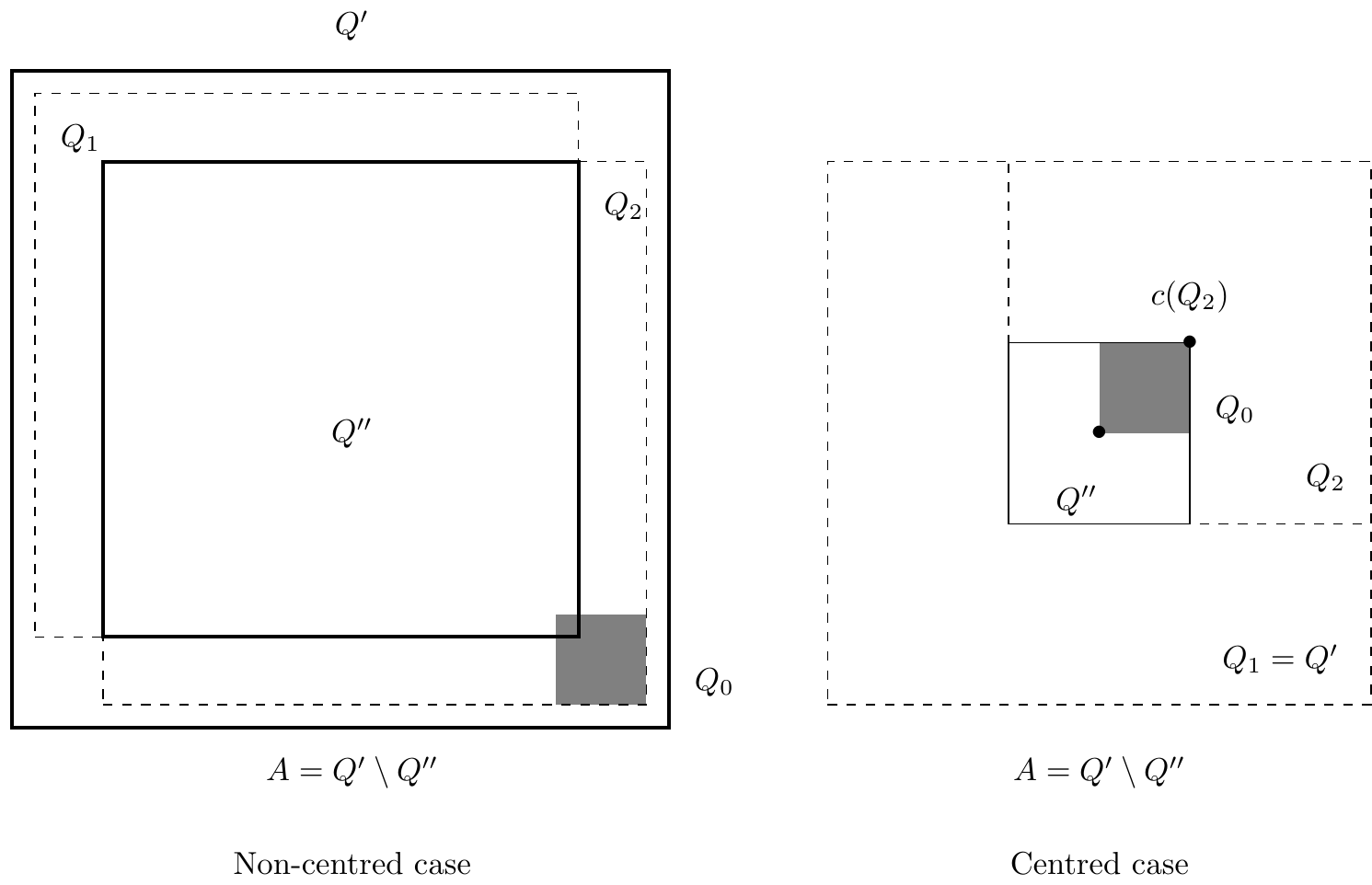}
\caption{\label{fig1:non-local} Construction of the annulus $A$ for both the centred and non-centred cases.} 
\end{figure}

\textit{Centred case.}
We modify the previous argument for the centred maximal operator. We also do it so that the argument carries over to the domain setting to be discussed later. We take again $Q_1$ that is centred at a point $y \in Q_0$. If $\ell (Q_1) < 10 r_0$, we can estimate
\[(u_{Q_1} - u_{Q_2})^{+} \lesssim r_0^{\alpha} \frac{\mu(Q_1)}{|Q_1|} \lesssim r_0^{\alpha} \inf_{z \in Q_0} M \mu (z)  \]
for any $Q_2$ centred at a point $x \in Q_0$ and $\ell(Q_2) = r_0 $. Taking the supremum over $Q_1$ we get the correct bound pointwise for
\[(U_2(y) - U_{Q_0} )^{+} \leq (U_2(y) - \inf_{z \in Q_0} U(z) )^{+}  ,\]
and we are done.

Assume then that $\ell(Q_1)  \geq 10 r_0$. Note that $Q_0 \subset Q_1$. Take $z \in Q_0$. We choose the maximal cube $Q_2$ centred at $z$ such that $Q_2 \subset Q_1$. Then 
\begin{align*} 
(u_{Q_1} - u_{Q_2})^{+} &= \frac{1}{|Q_1|} \left( \int_{Q_1} u \dx - \int_{Q_2} u \dx + (|Q_2|-|Q_1|)u_{Q_2} \right)^{+} \\
&= \frac{1}{|Q_1|} \left( \int_{Q_1 \setminus Q_2} u \dx - (|Q_1 \setminus Q_2|)u_{Q_2} \right)^{+} \\
&\leq \frac{1}{|Q_1|} \int_{Q_1 \setminus Q_2} (u-u_{Q_2})^{+} \dx.
\end{align*} 
We can compute further
\begin{align}
\label{eq:case2}
\frac{1}{|Q_1|} \int_{Q_1 \setminus Q_2} (u-u_{Q_2})^{+} \dx &\leq  \frac{1}{|Q_1|} \int_{Q_1 \setminus Q_2} (u-u_{Q_{*}})^{+} \dx    \\
& \qquad + \frac{|Q_1 \setminus Q_2|}{|Q_1|}  (u_{Q_{*}}-u_{Q_2})^{+} . \nonumber
\end{align}
The choice of $Q^{*}$ is done as follows. In the first term, note that $Q_1 \setminus Q_2$ is contained in an annulus $A = Q' \setminus Q''$ such that $\ell(Q') - \ell(Q'')  =  2 r_0$. We can choose $Q_1 = Q'$ in this case. Later in connection Corollary \ref{thm:domain}, it will be crucial that we can choose $A \subset Q_1$. We let $Q_{*}$ be the central triadic subcube of $Q'$. The second term is then clear by the fact that
\[\frac{|Q_1 \setminus Q_2|}{|Q_1|}  (u_{Q_{*}}-u_{Q_2})^{+} \lesssim \frac{r_0}{\ell(Q_1)} \diam(Q_2)^{\alpha} \frac{\mu(Q_2)}{|Q_2|} \lesssim r_0^{\alpha}  M \mu (x_0) \] 
where we used that $r_0 \lesssim \diam(Q_2) \eqsim l(Q_1)$ and $\alpha \in [0,1]$. Hence we have reduced the proof to estimating \eqref{eq:case1} or equivalently the first term in \eqref{eq:case2}:
\[\frac{1}{|Q_1|} \int_{A} (u - u_{Q_*})^{+}. \]

\subsection{Whitney decomposition}
We form a Whitney decomposition of $Q'$. Namely, we first divide each side by $3$. We choose the middle cube to $\mathcal{W}_1$. This was previously called $Q_*$. We subdivide the remaining cubes by halving each side length and the resulting cubes that touch the element of $\mathcal{W}_1$ form $\mathcal{W}_2$. Continuing inductively by dividing always the side length by $2$, we get a collection of cubes $\mathcal{W} = \cup_{i=1}^{\infty} \cup \mathcal{W}_i$ with the properties
\begin{align*}
Q \cap P &\in \{\varnothing, Q, P\}, \quad Q,P \in \mathcal{W} ,\\
\ell(Q) &=  d(Q , \partial Q') ,  \\
\cup_{Q \in \mathcal{W}} Q &= Q' \quad \textrm{up to a set of measure zero.}
\end{align*}
Let $\mathcal{A} = \{Q \in \mathcal{W} : Q \cap A \neq \varnothing \}$.


For each $Q \in \mathcal{A}$, we join the center $c(Q)$ to the center of $Q'$ by a straight line $\gamma$. In the center, there is the Whitney cube $Q_*$. Let 
\[\mathcal{C}(Q) = \{Q \in \mathcal{W}:   Q \cap \gamma \neq \varnothing \} .\]
For all $Q \in \mathcal{A}$, we write $\mathcal{C}(Q) = \{Q^i\}_{i=1}^{m_Q}$ where the cubes are ordered according to their distance from the point $c(Q)$. Note that 
\begin{align*}
| (2Q^i) \cap (2Q^{i+1}) | \eqsim_n |Q^i| ,\\
Q^1 = Q \quad \textrm{and} \quad Q^{m_Q} = Q_* .
\end{align*}

The previous construction at hand, we are ready to estimate
\begin{align*}
 \int_{A} \abs{u-u_{Q_{*}} } \dx \leq \sum_{Q \in \mathcal{A} } \int _{Q} \abs{u-u_{Q_*}} \dx .
\end{align*}
By the property $|2Q^i \cap 2Q^{i+1} | \eqsim_n |Q^i| $ there is $R^i \subset 2Q^i \cap 2Q^{i+1}$ with $|R^i| \eqsim_n Q^i$ so that
\begin{align*}
\int _{Q} |u-&u_{Q_*}| \dx \leq  \int _{Q} \abs{u-u_{Q}} \dx +  \abs{Q}  \sum_{j = 1}^{m_Q-1} |u_{Q^i} - u_{Q^{i+1}}| \\
 &\leq   \int _{Q} \abs{u-u_{Q}} \dx +  \abs{Q}  \sum_{j = 1}^{m_Q-1} \dashint_{R^i} (|u_{Q^i}-u  |+ |u - u_{Q^{i+1}}|) \dx \\
 &\lesssim \abs{Q}  \sum_{P \in \mathcal{C}(Q) } \diam (P)^{\alpha}  \frac{\mu(P)}{|P|} \\
 &= \abs{Q}  \sum_{P \in \mathcal{C}(Q) } |P|^{\alpha/n-1}   \mu(P)
\end{align*} 
Let $\mathcal{S(P)} = \{Q \in \mathcal{A}: P \in \mathcal{C}(Q) \}$. Notice the following:
\begin{enumerate}
\item[(i)] If $P \in \mathcal{A}$, then $Q \subset c_n P$ for a dimensional constant $c_n$ whenever $Q \in \mathcal{S}(P)$. This follows from the fact that $\mathcal{C}(Q)$ cannot contain cubes of equal size more than a uniformly bounded amount.
\item[(ii)] If $P \in \mathcal{W} \setminus \mathcal{A}$, then $Q \subset A_P = (c_n P) \cap A$ for a dimensional constant $c_n$ whenever $Q \in \mathcal{S}(P)$.
\end{enumerate}

We change the order of summation in the previous estimate to get
\begin{align*}
\sum_{Q \in \mathcal{A} } \int _{Q} \abs{u-u_{Q_*}} \dx & \lesssim  \sum_{Q \in \mathcal{A} } \abs{Q}  \sum_{P \in \mathcal{C}(Q) } |P|^{\alpha/n-1}   \mu(P)\\
& \leq  \sum_{P \in \mathcal{W} } \sum_{Q \in \mathcal{S}(P) } \abs{Q}   |P|^{\alpha/n-1}   \mu(P)\\
& = \sum_{P \in \mathcal{W} \cap \mathcal{A} }   \sum_{Q \in \mathcal{S}(P) } \abs{Q}   |P|^{\alpha/n-1}   \mu(P) \\
& \qquad + \sum_{P \in \mathcal{W} \setminus \mathcal{A} }   \sum_{Q \in \mathcal{S}(P) } \abs{Q}   |P|^{\alpha/n-1}   \mu(P)  \\
&= S_1 + S_2 .
\end{align*}
To estimate the the term $S_1$, we note that by (i)
\[\sum_{P \in \mathcal{W} \cap \mathcal{A} } \sum_{Q \in \mathcal{S}(P) } \abs{Q}   |P|^{\alpha/n-1} \mu(P) \lesssim \sum_{P \in \mathcal{W} \cap \mathcal{A} } |P|^{\alpha/n}  \mu(P). \]
In term $S_2$ we use the fact (ii) and an estimate for the volume of a cube intersected with an annulus. We obtain the bound
\begin{align*}
\sum_{P \in \mathcal{W} \setminus \mathcal{A} }   \sum_{Q \in \mathcal{S}(P) } \abs{Q}   |P|^{\alpha/n-1}  \mu(P)& \leq \sum_{P \in \mathcal{W} \setminus \mathcal{A} }  |A_P|    |P|^{\alpha/n-1}  \mu(P)  \\
&\lesssim \sum_{P \in \mathcal{W} \setminus \mathcal{A} }   \frac{r_0 |P|}{|P|^{1/n}} \cdot    |P|^{\alpha/n-1}  \mu(P) \\
& = r_0^{\alpha} \sum_{P \in \mathcal{W} \setminus \mathcal{A} } \left( \frac{|P|^{1/n}}{r_0}  \right)^{\alpha -1}   \mu(P) \\
& \leq r_0^{\alpha} \sum_{P \in \mathcal{W} \setminus \mathcal{A} }  \mu(P)
\end{align*}
since $\alpha \in [0,1]$ and $r_0 \leq |P|^{1/n}$ for $P \in \mathcal{W} \setminus \mathcal{A} $. Hence
\[S_1 + S_2 \lesssim r_0^{\alpha} \mu(Q') .\]
The constants are uniform in the cubes $Q_1$ and $Q_2$ so we can conclude that
\begin{equation}
\label{eq:todistus3}
\dashint_{Q_0} (U_2 - U_{Q_0})^{+} \dx \lesssim \diam (Q_0)^{\alpha} \inf _{z \in Q_0} M\mu(z) .
\end{equation}
This completes the proof of the bound for term $II$, both in the centred and the non-centred case.

\subsection{Final estimate}
Completing the estimate \eqref{eq:todistus1} by using \eqref{eq:todistus2} and \eqref{eq:todistus3} we reach the desired Poincar\'e inequality
\[ \dashint_{Q} \abs{Mu -(Mu)_{Q}} \dx \leq C \diam(Q)^{\alpha} \inf_{z \in Q} M\mu(z)  .\]
Note that the maximal function on the right hand side is whichever we prefer to use since the centred and the non-centred maximal functions are pointwise comparable. 
\end{proof}

\begin{remark}
It is also possible to run the previous proof for the maximal function defined using balls. Even if the proof seems to heavily rely on the properties of cubes, most cubes appearing in the proof are not related to the maximal function. In the case of the maximal function using balls, the local step would be almost identical. In the non-local step,  we have to use the Whitney decomposition of a ball instead of that of a cube, but are no essential differences. The argument for the centred maximal function also works in the setting where the centred maximal function is restricted to a proper subdomain of $\mathbb{R}^{n}$. Consequences of this extension are discussed in Corollary \ref{thm:domain}. The argument above does not work as such if we try to bound non-centred maximal function in a domain.
\end{remark}

\begin{remark}
We can also write the inequality resulting from Theorem \ref{theorem:preserved} as
\[ \dashint_{Q} \abs{Mu -(Mu)_{Q}} \dx \leq C \diam(Q)^{\alpha} \cdot \sup_{Q' \supset Q} \frac{\mu(Q')}{|Q'|}   , \]
and this can be verified using the same proof with a more careful bookkeeping. It is also possible to interpret the quantities appearing in the proof in the spirit of the representation formulas of \cite{Luiro2007}.
\end{remark}

Next we point out how several classical results can be deduced from Theorem \ref{theorem:preserved}. Consider functions $u \in W^{1,p}(\mathbb{R}^{n})$ where $p \in (0,1)$. Since
\[\inf _{z \in Q} M|\nabla u|(z) \leq \left( \dashint_{Q} M(|\nabla u|)^{p} \dx \right) ^{1/p} \]
for all $p \in (1, \infty)$, our Theorem \ref{theorem:preserved} and Lemma 6 in Haj\l{}asz \cite{Hajlasz2003} give a new proof for the boundedness result $M:W^{1,p}(\mathbb{R}^{n})  \to W^{1,p}(\mathbb{R}^{n})$. This was originally proved by Kinnunen in \cite{Kinnunen1997}. See also \cite{HO2004}.

\begin{corollary}[Kinnunen \cite{Kinnunen1997}]
\label{cor:kinnunen1997}
Let $p > 1$. Then the Hardy-Littlewood maximal function is bounded $M:W^{1,p}(\mathbb{R}^{n}) \to W^{1,p}(\mathbb{R}^{n})$. In addition,
\[|\nabla Mu| \lesssim M |\nabla u|.\] 
\end{corollary}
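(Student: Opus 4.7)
The plan is to apply Theorem \ref{theorem:preserved} with $\alpha = 1$ and the measure $d\mu = |\nabla u|\,dx$, and then invoke the converse direction of the Poincar\'e characterization of $W^{1,p}$ (the first item in the list after Definition \ref{def:poincare}, i.e.\ Lemma 6 of \cite{Hajlasz2003}). First I would reduce to the case $u \geq 0$: since $Mu = M|u|$, $\|\,|u|\,\|_{W^{1,p}} = \|u\|_{W^{1,p}}$, and $|\nabla |u|| = |\nabla u|$ a.e., there is no loss. Because $p > 1$, the Hardy--Littlewood theorem gives $Mu \in L^{p}(\mathbb{R}^{n}) \subset L^{1}_{\loc}(\mathbb{R}^{n})$, so the standing hypothesis of Theorem \ref{theorem:preserved} is in force.

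Next, by the first example following Definition \ref{def:poincare}, $u$ satisfies \eqref{eq:poincare_general} with $\alpha = 1$ and $\mu$ the measure with $L^{p}$ density $|\nabla u|$. Theorem \ref{theorem:preserved} therefore produces
\[
\dashint_{Q} |Mu - (Mu)_{Q}|\,dx \lesssim \diam(Q)\,\inf_{z \in Q} M\mu(z) = \diam(Q)\,\inf_{z \in Q} M|\nabla u|(z)
\]
for every cube $Q$, where the last identity uses that $M\mu(z) = \sup_{Q \ni z} \mu(Q)/|Q| = M|\nabla u|(z)$. Setting $g := M|\nabla u|$, the boundedness of $M$ on $L^{p}$ (valid since $p>1$) gives $g \in L^{p}(\mathbb{R}^{n})$, and since $\inf_{Q} g \leq \dashint_{Q} g\,dx$, we have shown that $Mu$ satisfies the Poincar\'e inequality with $\alpha=1$, $q=1$, and gradient majorant $g \in L^{p}$.

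Applying Lemma 6 of \cite{Hajlasz2003} in the converse direction, together with $Mu \in L^{p}$, yields $Mu \in W^{1,p}(\mathbb{R}^{n})$ with the pointwise bound $|\nabla Mu| \lesssim M|\nabla u|$ a.e. Taking $L^{p}$ norms and using $L^{p}$ boundedness of $M$ once more,
\[
\|Mu\|_{W^{1,p}} \lesssim \|Mu\|_{p} + \|M|\nabla u|\|_{p} \lesssim \|u\|_{p} + \|\nabla u\|_{p} = \|u\|_{W^{1,p}},
\]
which is the required bound. There is no real obstacle here: the argument is just the observation that Theorem \ref{theorem:preserved} specializes, with $\alpha=1$ and $d\mu = |\nabla u|\,dx$, to exactly the input needed by the Poincar\'e characterization of $W^{1,p}$; the only point requiring care is verifying that the quantity $\inf_{z \in Q} M\mu(z)$ on the right-hand side of Theorem \ref{theorem:preserved} can be replaced (after a harmless averaging) by an element of $L^{p}$, which is precisely what $p>1$ and the Hardy--Littlewood theorem guarantee.
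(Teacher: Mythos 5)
Your proposal is correct and follows essentially the same route as the paper: reduce to $|u|$, apply Theorem \ref{theorem:preserved} with $\alpha=1$ and $d\mu = |\nabla u|\,dx$, and then conclude via Lemma 6 of Haj\l{}asz \cite{Hajlasz2003} together with the $L^{p}$-boundedness of $M$. The only cosmetic difference is that you pass from $\inf_{z\in Q} M|\nabla u|(z)$ to the $L^{1}$ average of $M|\nabla u|$ over $Q$, while the paper records the analogous bound by the $L^{p}$ average; both serve the same purpose in invoking the Poincar\'e characterization of $W^{1,p}$.
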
 
\begin{proof}
Note that $|\nabla u| = |\nabla |u||$. Then $|u|$ and $|\nabla u|$ satisfy a Poincar\'e inequality with $\alpha = 1$. Since $u \in L^{p}(\mathbb{R}^{n})$, $Mu$ is locally integrable. By Theorem \ref{theorem:preserved}, $Mu$ and $M|\nabla u|$ satisfy a Poincar\'e inequality with $\alpha = 1$. By Lemma 6 in Haj\l{}asz \cite{Hajlasz2003} the claim follows.
\end{proof}

Specializing to $\mu \in L^{\infty}(\mathbb{R}^{n})$ and lowering the value of $\alpha$ in the Poincar\'e inequality, we end up studying spaces of H\"older continuous functions. We get the following corollary. See also \cite{CF1987} for a related result.
\begin{corollary}[Buckley \cite{Buckley1999}]
Let $u \in \Lambda(\alpha)$ with $\alpha \in (0,1]$ be such that $Mu \in L^{1}_{loc}(\mathbb{R}^{n})$. Then $Mu \in \Lambda(\alpha)$ with the same $\alpha$.  
\end{corollary}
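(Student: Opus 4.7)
The plan is to interpret membership in $\Lambda(\alpha)$ through the Campanato--Meyers characterization recalled in the list after Definition \ref{def:poincare}, apply Theorem \ref{theorem:preserved}, and then translate back. Because Theorem \ref{theorem:preserved} requires a positive function, the first step is to replace $u$ by $|u|$. This is harmless for two reasons: first, $Mu = M|u|$, so the conclusion for $|u|$ is the conclusion for $u$; second, the pointwise inequality $\bigl| |u(x)| - |u(y)| \bigr| \leq |u(x) - u(y)|$ shows that $|u| \in \Lambda(\alpha)$ with Hölder constant no larger than that of $u$.

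With $u$ assumed nonnegative, I would choose $\mu$ to be a constant multiple of Lebesgue measure, namely $d\mu = C\, dx$, where $C$ is the Campanato--Meyers constant for $u$. By the characterization cited in the paper (see the third bullet after Definition \ref{def:poincare}, extended to $\alpha = 1$ in the Lipschitz/Campanato sense), this choice gives
\[
\dashint_{Q} |u - u_Q|\dx \leq C\,\diam(Q)^{\alpha} = \diam(Q)^{\alpha}\,\frac{\mu(Q)}{|Q|}
\]
for every cube $Q$, so the hypothesis \eqref{eq:poincare_general} of Theorem \ref{theorem:preserved} is satisfied with the present $\alpha \in (0,1]$ and this $\mu$. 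The additional hypothesis $Mu \in L^{1}_{\loc}(\mathbb{R}^n)$ is assumed in the statement of the corollary.

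Applying Theorem \ref{theorem:preserved} then yields a constant $C'$, comparable to $C$, such that
\[
\dashint_{Q} |Mu - (Mu)_Q|\dx \leq C'\,\diam(Q)^{\alpha}\,\inf_{z \in Q} M\mu(z)
\]
for every cube $Q$. Since $\mu$ has constant density $C$ with respect to Lebesgue measure, $M\mu \equiv C$, so the right-hand side is $C'\,C\,\diam(Q)^{\alpha}$. Invoking the Campanato--Meyers characterization once more, this time in the reverse direction, gives $Mu \in \Lambda(\alpha)$.

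No step here is a real obstacle: the entire content of the argument is packed into Theorem \ref{theorem:preserved}, and what remains is the observation that $L^\infty$ densities are preserved by the Hardy--Littlewood maximal operator (trivially, since the supremum of averages of a constant is that constant). The only minor subtlety is the case $\alpha = 1$, which is covered by the Lipschitz-type Campanato characterization rather than strict Hölder continuity; this causes no additional work.
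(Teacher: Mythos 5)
Your argument is correct and follows the paper's own route: invoke the Campanato--Meyers characterization of $\Lambda(\alpha)$, apply Theorem \ref{theorem:preserved} with $\mu$ a constant multiple of Lebesgue measure, observe $M\mu$ is that same constant, and translate back. Your extra care with the reduction to $|u|\geq 0$ and with the endpoint $\alpha=1$ (Lipschitz case of the Campanato characterization) is sound and merely makes explicit what the paper leaves implicit.
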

\begin{proof}
The space $\Lambda(\alpha)$ consist of the measurable functions that have an $\alpha$-H\"older continuous representative. It is characterized by a Poincar\'e inequality of $u$ and $\mu = 1$ with exponent $\alpha$ for the diameter (see \cite{Meyers1964}). By Theorem \ref{theorem:preserved}, the maximal function preserves this Poincar\'e inequality provided that $Mu$ is locally integrable. Hence the claim follows. 
\end{proof}

Finally, looking at $\alpha = 0$ and $\mu \in L^{\infty}(\mathbb{R}^{n})$, we can include $\BMO$. It is interesting that we can cover both $\BMO$ and Sobolev spaces with a single proof. Finding such a proof was our original motivation for writing this note about Sobolev spaces, Poincar\'e inequalities, and the maximal function.

\begin{corollary}[Bennett, DeVore, and Sharpley \cite{BdVS1981}]
Let $u \in \BMO$ with $Mu \in L^{1}_{loc}(\mathbb{R}^{n})$. Then 
\[\norm{Mu}_{\BMO} \lesssim \norm{u}_{\BMO} .\]
\end{corollary}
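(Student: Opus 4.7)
The plan is to read off the corollary as an almost immediate specialization of Theorem \ref{theorem:preserved} to the case $\alpha = 0$ and $\mu$ equal to a constant multiple of Lebesgue measure.

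First I would reduce to the positivity hypothesis of Theorem \ref{theorem:preserved}. Since $Mu = M\lvert u\rvert$ and $\lvert u\rvert \in \BMO$ with $\norm{\lvert u\rvert}_{\BMO} \lesssim \norm{u}_{\BMO}$ by the standard Lipschitz property of the absolute value on $\BMO$, we may replace $u$ by $\lvert u\rvert$ without loss of generality and assume $u \geq 0$. The local integrability of $Mu$ is given.

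Next I would translate the $\BMO$ assumption into the Poincaré hypothesis of Theorem \ref{theorem:preserved}. Setting $\mu = \norm{u}_{\BMO}\cdot\mathcal{L}^n$ (Lebesgue measure scaled by the $\BMO$ norm), the defining inequality $\dashint_Q \abs{u - u_Q}\dx \leq \norm{u}_{\BMO}$ is exactly \eqref{eq:poincare_general} with $\alpha = 0$ and $\mu$ a multiple of Lebesgue measure; this matches the first of the two alternatives allowed in Theorem \ref{theorem:preserved}.

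Applying the theorem then yields
\[\dashint_Q \abs{Mu - (Mu)_Q}\dx \lesssim \inf_{z \in Q} M\mu(z).\]
The concluding observation is that $M$ applied to (a constant multiple of) Lebesgue measure is identically that constant, so $M\mu(z) = \norm{u}_{\BMO}$ pointwise. Taking the supremum over cubes $Q$ on the left gives $\norm{Mu}_{\BMO} \lesssim \norm{u}_{\BMO}$, which is the claim.

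There is no real obstacle here: once the $\BMO$ condition is recast as a generalized Poincaré inequality with $\mu$ proportional to Lebesgue measure, the theorem does all the work, and the only point worth checking is the trivial identity $M\mathcal{L}^n \equiv 1$. The slight nuisance of the positivity assumption is handled by passing to $\lvert u\rvert$.
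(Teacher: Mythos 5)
Your proposal is correct and is essentially the paper's own derivation: the paper likewise obtains this corollary directly from Theorem \ref{theorem:preserved} with $\alpha = 0$ and $\mu$ (a multiple of) Lebesgue measure, with the $\BMO$ constant absorbed into the constant $C$ of \eqref{eq:poincare_general}. Your added remarks on passing to $\lvert u\rvert$ and on $M\mu$ being constant are exactly the routine checks needed; the paper's separate ``another simple proof'' is an independent alternative, not a gap in your argument.
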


The claim follows directly from Theorem \ref{theorem:preserved}. On the other hand, the proof of Theorem \ref{theorem:preserved} also contains a simplification of the original argument of \cite{BdVS1981}. Since this might be of independent interest, we write down another simple proof of this corollary. We need not use the $A_p$ weights like for instance the proof in \cite{CF1987}.

\begin{proof}[Another simple proof]
Let $M$ be the non-centred maximal function. Since $|u| \in \BMO$, we can assume that $u \geq 0$. Take a cube $Q$. Let $E = Q \cap \{Mu = M(1_{3Q} u)\}$. Then
\begin{align*}
&\int_{Q} | Mu - (Mu)_{Q} | \dx  =  2\int_{Q} ( Mu - (Mu)_{Q} ) ^{+} \dx \\
	& = 2  \int_{E} ( M(1_{3Q}u) - (Mu)_{Q} ) ^{+}\dx 
	 + 2  \int_{Q \setminus E} ( Mu - (Mu)_{Q} ) ^{+} \dx \\
	& \leq 2  \int_{Q} |M(1_{3Q}(u - u_{3Q} ))|\dx 
	 + 2  \int_{Q\setminus E} \sup_{\substack{ x \in P \\ |P| > |Q| }} | u_P - u_{2P} |\dx \\ 
	& = I + II.
\end{align*}
$P$ denotes a cube in the supremum. Clearly $II \lesssim  |Q| \norm{u}_{\BMO} $. By H\"older's inequality, $L^{2}$ boundedness of $M$ and the John-Nirenberg inequality
\begin{align*}
I  \lesssim |Q|^{1/2} \norm{M(1_{3Q}(u - u_{3Q} ))}_{L^{2}} 
	&\lesssim |Q|^{1/2} \norm{1_{3Q}(u - u_{3Q} )}_{L^{2}} \\
	&\lesssim |Q| \norm{u}_{\BMO}
\end{align*}
so the claim follows.
\end{proof}

\section{The endpoint Sobolev space}
Next we use Theorem \ref{theorem:preserved} in the endpoint space $W^{1,1}(\mathbb{R}^{n})$ to prove that the distributional partial derivatives of $Mu$ acting on test functions vanishing at a singularity set are actually functions in $L^{1,\infty}(\mathbb{R}^{n})$. Moreover, if we assume that $|\nabla u| \in L \log L (\mathbb{R}^{n})$, we recover local integrability of $\nabla Mu$. This was first proved by Haj\l{}asz and Onninen in \cite{HO2004}. Of course, we do not know if any of the results is optimal. Recall that no counterexample for $u \in W^{1,1}(\mathbb{R}^{n})$ implying $|\nabla Mu| \in L^{1}_{loc}$ is known at the moment. The validity of the boundedness of $u \mapsto  |\nabla Mu|$ from $W^{1,1}(\mathbb{R}^{n})$ to $L^{1}(\mathbb{R}^{n})$ is an interesting open question.

There are also other approaches to the maximal function of $u \in W^{1,1}(\mathbb{R}^{n})$. Haj\l{}asz and Mal\'y \cite{HM2010} show that $Mu$ is approximately differentiable almost everywhere, which implies that it coincides with a continuously differentiable function outside an open set of arbitrarily small measure. This property is weaker than weak differentiability. On the other hand, they prove the result in a more general setting under less stringent assumptions. See also the related paper by Luiro \cite{Luiro2013}.


\begin{corollary}
\label{thm:sobolev}
Let $u \in W^{1,1}(\mathbb{R}^{n})$. Then the distributional partial derivatives $\partial_i Mu$ can be represented as functions $h_i \in L^{1,\infty}(\mathbb{R}^{n})$ when they act on smooth functions with compact support not meeting
\[\{x \in \mathbb{R}^{n}: \liminf_{\delta \to 0} \int_{B(x,\delta)} M |\nabla u| \dy = \infty \}.\]
Moreover,
\[ |h_i| \lesssim M |\nabla u| , \quad i \in \{1,\ldots, n\} . \]
\end{corollary}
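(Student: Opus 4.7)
The plan is to apply Theorem \ref{theorem:preserved} to $|u|$ and then invoke Haj\l{}asz's Lemma 6 from \cite{Hajlasz2003} locally on the open complement $\Omega := \mathbb{R}^{n} \setminus S$ of the singularity set $S$ appearing in the statement. First I would reduce to the case $u \geq 0$, which is harmless since $Mu = M|u|$ and $|\nabla |u|| = |\nabla u|$ almost everywhere. The Sobolev embedding $W^{1,1}(\mathbb{R}^{n}) \hookrightarrow L^{n/(n-1)}(\mathbb{R}^{n})$ for $n \geq 2$ (or $W^{1,1}(\mathbb{R}) \hookrightarrow L^{\infty}(\mathbb{R})$ for $n=1$) combined with $L^{p}$-boundedness of $M$ for $p>1$ places $Mu$ in $L^{1}_{loc}(\mathbb{R}^{n})$, so Theorem \ref{theorem:preserved} applies with $\alpha=1$ and $d\mu = |\nabla u| \dx$, giving
\[ \dashint_{Q} |Mu - (Mu)_Q| \dx \lesssim \diam(Q) \inf_{z \in Q} M|\nabla u|(z) \]
for every cube $Q \subset \mathbb{R}^{n}$.

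Next I would identify $\Omega$ with the set on which $M|\nabla u|$ is locally integrable. Since $\delta \mapsto \int_{B(x,\delta)} M|\nabla u| \dy$ is monotone in $\delta$, the $\liminf$ defining $S$ equals the limit, and that limit is finite exactly when $M|\nabla u|$ is integrable on some neighborhood of $x$. The inclusion $B(y,\delta_x/2) \subset B(x,\delta_x)$ for $|y-x| < \delta_x/2$ makes $\Omega$ open, and a finite cover argument gives $M|\nabla u| \in L^{1}_{loc}(\Omega)$. On any cube $Q$ compactly contained in $\Omega$, both $Mu$ and $M|\nabla u|$ belong to $L^{1}(Q)$, and the trivial bound $\inf_{z \in Q} M|\nabla u|(z) \leq \dashint_{Q} M|\nabla u| \dz$ combined with the Poincar\'e inequality above matches the hypothesis of Haj\l{}asz's Lemma 6. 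That lemma then produces weak partial derivatives $\partial_i Mu$ on $\Omega$ satisfying $|\partial_i Mu| \lesssim M|\nabla u|$ almost everywhere on $\Omega$.

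Finally I would extend to all of $\mathbb{R}^{n}$ by setting $h_i := \partial_i Mu$ on $\Omega$ and $h_i := 0$ on $S$, so that $|h_i| \lesssim M|\nabla u|$ everywhere. Since $|\nabla u| \in L^{1}(\mathbb{R}^{n})$ and $M \colon L^{1} \to L^{1,\infty}$ is bounded, this places $h_i$ in $L^{1,\infty}(\mathbb{R}^{n})$. For any $\varphi \in C_{c}^{\infty}(\mathbb{R}^{n})$ whose compact support avoids $S$, that support lies in the open set $\Omega$, where $Mu \in W^{1,1}_{loc}(\Omega)$ with weak derivative $h_i$; standard integration by parts in $\Omega$ therefore gives $-\int Mu \, \partial_i \varphi \dx = \int h_i \varphi \dx$, which is the desired representation of the distributional derivative.

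The main obstacle is that $M|\nabla u|$ belongs only to $L^{1,\infty}(\mathbb{R}^{n})$ and not to $L^{1}_{loc}(\mathbb{R}^{n})$, so Haj\l{}asz's characterization cannot be applied globally as in the proof of Corollary \ref{cor:kinnunen1997}; passing to $\Omega$ is precisely what restores the local $L^{1}$ hypothesis required by Lemma 6. The only genuinely geometric point is the identification of $\Omega$ with the locus of local integrability of $M|\nabla u|$, which reduces, via the monotone dependence of $\int_{B(x,\delta)} M|\nabla u| \dy$ on $\delta$, to a straightforward observation about shrinking balls.
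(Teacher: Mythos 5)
Your proposal is correct, and it differs from the paper's proof mainly in how the passage from the Poincar\'e inequality of Theorem \ref{theorem:preserved} to the weak derivative is organized. The paper deliberately does not cite Haj\l{}asz's Lemma 6 at this point, because $M|\nabla u|$ need not belong to $L^{1}_{\mathrm{loc}}(\mathbb{R}^{n})$; instead it reruns the mollification argument behind that lemma by hand: it bounds $|Mu*\partial_i\psi_\epsilon|\lesssim M|\nabla u|$ pointwise via Theorem \ref{theorem:preserved}, and then identifies the distributional limit, acting on test functions supported off the singular set, through the Riesz representation theorem and the Radon--Nikodym theorem. You instead observe that the complement $\Omega$ of the singular set is open and is precisely the locus where $M|\nabla u|$ is locally integrable (your monotonicity remark about $\delta\mapsto\int_{B(x,\delta)}M|\nabla u|\dy$ is the right justification, and it subsumes the paper's verification that the singular set is closed), and then you run the $p>1$ machinery of Corollary \ref{cor:kinnunen1997} locally on $\Omega$, extending $h_i$ by zero and using the weak $(1,1)$ bound for $M$ to land in $L^{1,\infty}(\mathbb{R}^{n})$. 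This buys brevity: no Riesz or Radon--Nikodym step is needed, and the $L^{1,\infty}$ membership and the bound $|h_i|\lesssim M|\nabla u|$ come out immediately. The one point you should make explicit is that Lemma 6 of \cite{Hajlasz2003} is stated for pairs defined on all of $\mathbb{R}^{n}$ with $g\in L^{1}_{\mathrm{loc}}(\mathbb{R}^{n})$, so on $\Omega$ you need its local version: since weak differentiability is a local notion and the mollification proof uses only balls of radius smaller than the distance to $\partial\Omega$, on which your Poincar\'e inequality is available, this localization is routine --- but writing it out amounts to reproducing the computation the paper performs, so the two arguments ultimately share the same engine.
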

\begin{proof}
First of all, note that $|u|$ satisfies the assumptions of Theorem \ref{theorem:preserved}. The property $Mu \in L^{1}_{loc}(\mathbb{R}^{n})$ follows from the Sobolev embedding theorem. The function $Mu$ gives rise to a bounded linear functional acting on smooth functions with compact support. We study the distributional derivative defined by
\[ \partial_i Mu (\varphi) = -\int_{\mathbb{R}^{n}} Mu \cdot (\partial_i \varphi) \dx   \]
where $\varphi$ belongs to the space of smooth and compactly supported functions $C_0^{\infty}(\mathbb{R}^{n})$. The distributional derivative of $Mu$ need not arise from a locally integrable function since $L^{1,\infty}(\mathbb{R}^{n})$ is the best integrability that we can expect at the moment.

Let 
\[A = \{x \in \mathbb{R}^{n}: \liminf_{\delta \to 0} \int_{B(x,\delta)} M |\nabla u| \dy = \infty \}. \]
Note that $A$ is closed. Indeed, take any $x \in \overline{A}$. Take a sequence $A \ni x_i \to x$. Take $\epsilon > 0$. There is $i$ such that $x_i \in B(x,\epsilon)$. Since $x_i \in A$, there is $\delta > 0$ such that $B(x_i,\delta) \subset B(x,\epsilon)$ and 
\[ \int_{B(x_i,\delta)} M |\nabla u| \dy = \infty.  \]
Consequently 
\[\int_{B(x,\epsilon) }  M |\nabla u| \dy \geq \int_{B(x_i,\delta)} M |\nabla u| \dy = \infty \]
and $x \in A$. So $A$ is closed. We define $\mathcal{D}(A^{c})$ to consist of all test functions $\varphi \in C_0^{\infty}(\mathbb{R}^{n})$ that are supported outside $A$. When we talk about distributions, we refer to the dual of this test function class supported outside $A$. We prove that $\partial_i Mu$ on $\mathcal{D}(A^{c})$ is given by integration against $h \in L^{1,\infty}(\mathbb{R}^{n})$. 

To prove that $\partial_i Mu$ is a measurable function, we use the argument of Haj\l{}asz \cite{Hajlasz2003}. Let $\psi \in C_0^{\infty}(B(0,1))$ be a positive function with $\int_{\mathbb{R}^{n}} \psi \dx = 1$. For $\epsilon \in (0,1)$ we define the dilations $\psi_\epsilon (x) = \frac{1}{\epsilon^{n}} \psi (\frac{x}{\epsilon})$. We have that
\[\partial_i Mu = \lim_{\epsilon \to 0} (Mu * \partial_i \psi_{\epsilon}) \]
in the sense of distributions. Since $\int \partial_i \psi_{\epsilon}  = 0$, also
\[(Mu * \partial_i \psi_{\epsilon}) (y) = [(Mu - (Mu)_{B(y,\epsilon)}) * \partial_i \psi_{\epsilon}](y) . \] 
By Theorem \ref{theorem:preserved}
\[| (Mu * \partial_i \psi_{\epsilon}) (y) | \leq C \epsilon^{-n-1} \int_{B(y,\epsilon)} |Mu - (Mu)_{B(y,\epsilon)}| \dx \leq C M| \nabla u |(y). \]
Let
\[g_\epsilon = (Mu * \partial_i \psi_{\epsilon}) \quad \textrm{and} \quad g = \lim_{\epsilon \to 0}  g_\epsilon = \lim_{\epsilon \to 0} (Mu * \partial_i \psi_{\epsilon}),  \]
where the limit is again in the sense of distributions.

Since 
\[ |g (\varphi)| = \lim_{\epsilon \to 0} | g_\epsilon (\varphi)| \leq C \norm{\varphi }_{L^{\infty}} \int_{\supp \varphi} M| \nabla u | \dx ,  \]
$g$ is a bounded linear functional on $C_0(\mathbb{R}^{n} \setminus A)$. By the Riesz representation theorem it is a signed Radon measure. It is also absolutely continuous with respect to the Lebesgue measure. By the Radon-Nikodym theorem it is represented by integration against a measurable function $h$. This proves the existence of the distributional derivative as a measurable function.

Since 
\[\int \varphi \partial_i Mu \dx \lesssim \norm{\varphi}_{L^{\infty}} \int_{\supp \varphi} M | \nabla u |  \dx \]
holds for all test functions, we can conclude that $\partial_i Mu \lesssim M | \nabla u |$ almost everywhere. 
\end{proof}

The proof was based on Theorem \ref{theorem:preserved} which in turn holds whenever the modulus of the generalized gradient is a general Radon measure. It need not be locally integrable function. Hence we can extend the result to the class of $\BV$ functions. See \cite{EG1991} for the precise definitions. 

\begin{corollary}
Let $u \in \BV$. Then the distributional partial derivatives $\partial_i Mu$ can be represented as functions $h_i \in L^{1,\infty}(\mathbb{R}^{n})$ when they act on smooth functions with compact support not meeting
\[\{x \in \mathbb{R}^{n}: \liminf_{\delta \to 0} \int_{B(x,\delta)} M |\nabla u| \dy = \infty \}.\]
Moreover,
\[ |h_i| \lesssim M |\nabla u|, \quad i \in \{1,2,\ldots, n\}. \]
\end{corollary}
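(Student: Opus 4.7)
The plan is to run the argument of Corollary \ref{thm:sobolev} essentially verbatim, with the only modification being that the role of the locally integrable function $|\nabla u|$ is replaced by the total variation Radon measure $|Du|$. By the characterization of $\BV$ listed after Definition \ref{def:poincare}, the pair $(u,|Du|)$ satisfies the Poincar\'e inequality of Definition \ref{def:poincare} with $\alpha=1$, $q=1$, which is exactly the hypothesis required in the second alternative of Theorem \ref{theorem:preserved}. We only need to check that $Mu$ is locally integrable: since $u \in L^{1}(\mathbb{R}^{n}) \subset L^{1}_{loc}(\mathbb{R}^{n})$, the weak-$(1,1)$ boundedness of $M$ already yields $Mu \in L^{1,\infty}(\mathbb{R}^{n}) \subset L^{1}_{loc}(\mathbb{R}^{n})$, and (if a stronger bound is needed) the $\BV$ Sobolev embedding $\BV \hookrightarrow L^{n/(n-1)}$ gives $Mu \in L^{n/(n-1),\infty}(\mathbb{R}^{n})$. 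Consequently Theorem \ref{theorem:preserved} applies and produces
\[\dashint_{Q}|Mu - (Mu)_{Q}|\dx \lesssim \diam(Q)\inf_{z \in Q}M|Du|(z),\]
where $M|Du|(z)=\sup_{Q \ni z}|Du|(Q)/|Q|$ is the usual maximal function of a Radon measure.

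Next I would reproduce, line by line, the topological argument showing that
\[A = \Bigl\{x \in \mathbb{R}^{n}: \liminf_{\delta \to 0}\int_{B(x,\delta)} M|\nabla u|\dy = \infty\Bigr\}\]
is closed, which only uses monotonicity of the integral in the radius and does not depend on whether $M|\nabla u|$ originates from a function or a measure. This identifies the correct test function class $\mathcal{D}(A^{c})$ of $C_0^\infty$ functions supported away from the singular set on which we will exhibit $\partial_i Mu$ as an $L^{1,\infty}$ function.

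The heart of the argument is the mollification step. Taking a standard mollifier $\psi_\eps$ and using that $\int \partial_i \psi_\eps = 0$, we rewrite
\[(Mu * \partial_i \psi_\eps)(y) = \bigl[(Mu - (Mu)_{B(y,\eps)}) * \partial_i \psi_\eps\bigr](y),\]
and plug in the Poincar\'e inequality obtained above for balls (using the remark following the proof of Theorem \ref{theorem:preserved} that the argument extends from cubes to balls). This produces the pointwise bound
\[|(Mu * \partial_i \psi_\eps)(y)| \leq C\eps^{-n-1}\int_{B(y,\eps)}|Mu - (Mu)_{B(y,\eps)}|\dx \leq C\, M|Du|(y).\]
The rest is then automatic: since $M|Du| \in L^{1,\infty}(\mathbb{R}^{n})$ by the weak-$(1,1)$ maximal theorem for Radon measures, and since the distributional limit $g = \lim_{\eps \to 0}(Mu * \partial_i \psi_\eps) = \partial_i Mu$ is dominated by integration against the locally finite function $M|Du|$ on $\mathcal{D}(A^{c})$, it is a bounded linear functional on $C_0(\mathbb{R}^{n}\setminus A)$, hence a signed Radon measure absolutely continuous with respect to Lebesgue measure on $\mathbb{R}^{n}\setminus A$. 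Radon-Nikodym yields the representative $h_i$ satisfying $|h_i| \lesssim M|\nabla u|$ almost everywhere.

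The only conceptual obstacle is justifying that the maximal function of a Radon measure behaves formally like the maximal function of a density in each step of the Sobolev argument. This causes no trouble: $M\mu$ is lower semicontinuous and of weak type $(1,1)$ with respect to $\mu(\mathbb{R}^{n})$ for every finite Radon measure $\mu$, which is all that is used. The novelty compared to Corollary \ref{thm:sobolev} is thus located entirely in the input to Theorem \ref{theorem:preserved}, while the distributional bookkeeping and the closedness of $A$ are identical.
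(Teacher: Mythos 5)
Your proposal is correct and follows essentially the same route as the paper, which proves the $W^{1,1}$ case (Corollary \ref{thm:sobolev}) and then simply observes that, since Theorem \ref{theorem:preserved} allows $\mu$ to be any locally finite positive Borel measure, the identical argument runs with $|\nabla u|\dx$ replaced by the total variation measure $|Du|$, exactly as you do. One slip to fix: the inclusion $L^{1,\infty}(\mathbb{R}^{n})\subset L^{1}_{loc}(\mathbb{R}^{n})$ you invoke first is false (consider $|x|^{-n}$ near the origin), so the local integrability of $Mu$ must rest on the second justification you mention, namely $\BV\hookrightarrow L^{n/(n-1)}(\mathbb{R}^{n})$ together with the boundedness of $M$ from $L^{n/(n-1)}$ to $L^{n/(n-1),\infty}$ and the local integrability of weak $L^{p}$ for $p>1$; this is the same Sobolev-embedding argument the paper uses in the $W^{1,1}$ case, so it should be stated as the actual proof rather than as an optional stronger bound.
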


The result concerning the endpoint Sobolev space can be localized. By minor modifications in the arguments, we can prove that Corollary \ref{thm:sobolev} holds true for centred maximal operators in subdomains $\Omega \subset \mathbb{R}^{n}$. By a domain we mean an open and connected set. We define
\[M_\Omega u(x) = \sup_{Q_x \subset \Omega} \dashint_{Q_x} |u| \dx \]
where the supremum is over all cubes centred at $x$ and with sides parallel to coordinate axes. 

\begin{corollary}
\label{thm:domain}
Let $\Omega \subset \mathbb{R}^{n}$ be a domain, and let $u \in W^{1,1}(\Omega)$. Then the distributional partial derivatives $\partial_i M_\Omega u$ can be represented as functions in $h_i \in L^{1,\infty}(\Omega)$ when they act on smooth functions with compact support not meeting
\[\{x \in \Omega :\liminf_{\delta \to 0} \int_{B(x,\delta)} M_\Omega |\nabla u| \dy = \infty \}.\]
Moreover,
\[| h_i | \lesssim M_\Omega |\nabla u| ,\quad i \in \{1,2,\ldots, n\}. \]
\end{corollary}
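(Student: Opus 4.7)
The plan is to repeat the argument of Corollary \ref{thm:sobolev} almost verbatim, after first noting that Theorem \ref{theorem:preserved} admits a localised analogue for the centred operator $M_\Omega$. As the remark following Theorem \ref{theorem:preserved} explicitly flags, the centred construction was arranged so that the annulus $A$ appearing between \eqref{eq:case1} and \eqref{eq:case2} can be chosen inside $Q_1$; since every cube $Q_1$ competing in the supremum defining $M_\Omega u(x)$ lies in $\Omega$, both the Whitney chain decomposition and the local Franchi--P\'erez--Wheeden estimate take place entirely inside $\Omega$ with $\mu = |\nabla u|\dx$ restricted to $\Omega$. This yields the localised Poincar\'e inequality
\[ \dashint_{Q} |M_\Omega u - (M_\Omega u)_Q| \dx \lesssim \diam(Q)\, \inf_{z \in Q} M_\Omega |\nabla u|(z) \]
for every cube $Q \subset \Omega$.

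Next I would verify, by the same sequence argument used in Corollary \ref{thm:sobolev}, that the singularity set $A = \{x \in \Omega : \liminf_{\delta \to 0} \int_{B(x,\delta)} M_\Omega |\nabla u| \dy = \infty\}$ is closed in $\Omega$. Fix $\varphi \in C_0^\infty(\Omega \setminus A)$ and set $\eta = \dist(\supp \varphi, \mathbb{R}^n \setminus \Omega)$; because $\supp \varphi$ is a compact subset of the open set $\Omega$, we have $\eta > 0$. Choose a mollifier $\psi \in C_0^\infty(B(0,1))$ with $\int \psi = 1$ and set $\psi_\epsilon(x) = \epsilon^{-n}\psi(x/\epsilon)$. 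For $0 < \epsilon < \eta/2$ and $y \in \supp \varphi$ the ball $B(y,\epsilon)$ lies in $\Omega$, so the localised Poincar\'e inequality applies at scale $\epsilon$ and, after the mean-zero trick $\int \partial_i \psi_\epsilon = 0$, gives
\[|(M_\Omega u * \partial_i \psi_\epsilon)(y)| = \bigl|\bigl[\bigl(M_\Omega u - (M_\Omega u)_{B(y,\epsilon)}\bigr) * \partial_i \psi_\epsilon\bigr](y) \bigr| \lesssim M_\Omega |\nabla u|(y)\]
exactly as in the proof of Corollary \ref{thm:sobolev}. Sobolev embedding yields $M_\Omega u \in L^1_{loc}(\Omega)$, so $M_\Omega u * \partial_i \psi_\epsilon \to \partial_i M_\Omega u$ in the sense of distributions on $\Omega \setminus A$.

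Denote the distributional limit by $g$. The pointwise estimate extends to $|g(\varphi)| \leq C \norm{\varphi}_{L^\infty} \int_{\supp \varphi} M_\Omega |\nabla u| \dx$, which is finite because $\supp \varphi$ is compactly contained in $\Omega \setminus A$. Hence $g$ extends to a bounded linear functional on $C_0(\Omega \setminus A)$; Riesz representation identifies it with a signed Radon measure which, by the same upper bound, is absolutely continuous with respect to Lebesgue measure. Radon--Nikodym then produces a measurable density $h_i$ satisfying $|h_i| \lesssim M_\Omega |\nabla u|$ almost everywhere on $\Omega \setminus A$. Since $M_\Omega f \leq M(1_\Omega f)$ pointwise, the weak type $(1,1)$ of $M$ applied to $|\nabla u| \in L^1(\Omega)$ gives $M_\Omega |\nabla u| \in L^{1,\infty}(\Omega)$, and therefore $h_i \in L^{1,\infty}(\Omega)$.

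The main obstacle is confirming that the Whitney-chain step in the non-local part of Theorem \ref{theorem:preserved} never exits $\Omega$; this is precisely the reason the author organised the centred construction so that the annulus satisfies $A \subset Q_1$. Once that localisation is invoked, the rest is a routine transcription of Corollary \ref{thm:sobolev}, with the mollifier scale restricted to $\epsilon < \dist(\supp \varphi, \mathbb{R}^n \setminus \Omega)/2$ so that all averages stay inside $\Omega$.
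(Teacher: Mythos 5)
Your proposal follows the paper's own route: localize Theorem \ref{theorem:preserved} to the centred operator $M_\Omega$ (using precisely the fact, flagged in the proof, that the annulus and Whitney chain can be taken inside $Q_1\subset\Omega$), and then rerun the mollification argument of Corollary \ref{thm:sobolev} at small scales. The one inaccuracy is your claim that the localized Poincar\'e inequality holds for \emph{every} cube $Q\subset\Omega$. It does not follow from the argument at that strength: the local part of the proof of Theorem \ref{theorem:preserved} works on the dilated cube $3Q_0$ (the Franchi--P\'erez--Wheeden bound is applied to $1_{3Q_0}(u-u_{3Q_0})$, and for $u\in W^{1,1}(\Omega)$ the hypothesis \eqref{eq:poincare_general} is only available for cubes inside $\Omega$), and the small-$Q_1$ case of the centred non-local part uses comparison cubes of sidelength $r_0$ centred at points of $Q_0$; so one only obtains the inequality for cubes $Q$ whose fixed dilate lies in $\Omega$ --- the paper states it for $20Q\subset\Omega$. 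Correspondingly, your restriction $\epsilon<\eta/2$ is not quite enough (it guarantees $B(y,\epsilon)\subset\Omega$ but not that the needed dilate stays in $\Omega$); you should take $\epsilon$ smaller than a fixed dimensional fraction of $\dist(\supp\varphi,\mathbb{R}^n\setminus\Omega)$. With that trivial adjustment the argument is exactly the paper's, since the mollification step only ever invokes the Poincar\'e inequality at arbitrarily small scales around points compactly inside $\Omega$.
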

\begin{proof}
We first note that Theorem \ref{theorem:preserved} carries over to this setting provided that we weaken the claimed inequality to only hold for $Q$ such that $20 Q \subset \Omega$. However, this is enough for the proof of Corollary \ref{thm:sobolev}. The claim follows from a verbatim repetition of its proof.
\end{proof}

The same method also applies to the case $p > 1$. Namely, we get a new proof for the theorem of Kinnunen and Lindqvist \cite{KL1998}. Even if the approaches of the original proofs were different, the proof via Poincar\'e inequalities does not see any difference between the full space and a proper domain as far as centred maximal functions are concerned.

\begin{corollary}[Kinnunen and Lindqvist \cite{KL1998}]
Let $\Omega \subset \mathbb{R}^{n}$ be a domain, and let $u \in W^{1,p}(\Omega)$. Then $M_\Omega u \in W^{1,p}(\Omega)$ and
\[| \nabla M_\Omega u | \lesssim M_\Omega |\nabla u| . \]
\end{corollary}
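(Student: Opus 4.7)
\medskip

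\noindent\textbf{Proof plan.} The plan is to combine the domain version of Theorem \ref{theorem:preserved} noted in the proof of Corollary \ref{thm:domain} with the mollification argument of Corollary \ref{thm:sobolev}, exploiting the extra integrability afforded by $p > 1$ to eliminate the singularity set altogether.

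First, the pair $(|u|, |\nabla u|\dx)$ satisfies the Poincar\'e inequality of Definition \ref{def:poincare} with $\alpha = 1$ and $q = 1$ on every cube contained in $\Omega$. Applying the domain modification of Theorem \ref{theorem:preserved} described in Corollary \ref{thm:domain} will then yield
\[ \dashint_{Q} |M_\Omega u - (M_\Omega u)_Q|\dx \lesssim \diam(Q)\, \inf_{z \in Q} M_\Omega |\nabla u|(z) \]
for every cube $Q$ with $20Q \subset \Omega$.

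Next, since $p > 1$, the Hardy-Littlewood maximal theorem gives $M_\Omega |\nabla u| \in L^{p}(\Omega)$, and in particular $M_\Omega |\nabla u| \in L^{1}_{\loc}(\Omega)$. Consequently the singularity set appearing in Corollary \ref{thm:sobolev} is empty in this setting, so I would repeat the mollification argument of that corollary essentially verbatim: fix $\varphi \in C_0^{\infty}(\Omega)$, take a standard bump $\psi \in C_0^{\infty}(B(0,1))$, and for $\epsilon$ smaller than a small constant times $\dist(\supp\varphi, \partial\Omega)$ consider $g_\epsilon = M_\Omega u * \partial_i \psi_\epsilon$. For each $y \in \supp\varphi$ one can find a cube $Q \supset B(y,\epsilon)$ with $20Q \subset \Omega$, so rewriting
\[ g_\epsilon(y) = \bigl[\bigl(M_\Omega u - (M_\Omega u)_{B(y,\epsilon)}\bigr) * \partial_i \psi_\epsilon\bigr](y) \]
and invoking the Poincar\'e inequality above produces the pointwise bound $|g_\epsilon(y)| \lesssim M_\Omega |\nabla u|(y)$. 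Since this dominating function lies in $L^{p}(\Omega)$, reflexivity of $L^{p}$ yields a weak limit $h_i$ that represents the distributional derivative $\partial_i M_\Omega u$ and inherits $|h_i| \lesssim M_\Omega |\nabla u|$ almost everywhere. Combined with $\|M_\Omega u\|_{L^{p}(\Omega)} \lesssim \|u\|_{L^{p}(\Omega)}$, this gives $M_\Omega u \in W^{1,p}(\Omega)$ together with the asserted gradient estimate.

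The only technical care point is the scale coupling between the mollifier radius $\epsilon$ and the requirement $20Q \subset \Omega$: one must verify that $\epsilon$ can be chosen small, uniformly for $y \in \supp\varphi$, so that the domain Poincar\'e inequality applies on every ball $B(y,\epsilon)$. Once this is checked the argument is a routine adaptation of Corollary \ref{thm:sobolev}, and no new ingredients beyond Theorem \ref{theorem:preserved} and the $L^{p}$ boundedness of the maximal function are needed. An alternative route would be to feed the Poincar\'e inequality directly into the Haj\l asz pointwise characterization used in Corollary \ref{cor:kinnunen1997}, but since that characterization is usually stated for all cubes in $\Omega$ rather than only those satisfying $20Q \subset \Omega$, the mollification route seems cleaner.
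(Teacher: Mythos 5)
Your proposal is correct and follows essentially the same route as the paper, whose proof is just a pointer: combine the domain Poincar\'e inequality for $M_\Omega u$ on cubes with $20Q \subset \Omega$ (from the proof of Corollary \ref{thm:domain}) with the passage from Poincar\'e inequality to gradient bound used for Corollary \ref{cor:kinnunen1997}. Your only deviation is cosmetic: you unpack the mollification argument of Corollary \ref{thm:sobolev} (with $L^{p}$ weak compactness replacing the Riesz/Radon--Nikodym step) instead of invoking Haj\l{}asz's Lemma 6 directly, which is the same underlying mechanism.
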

\begin{proof}
This is essentially same as the proof given for Corollary \ref{cor:kinnunen1997} as the proof of Corollary \ref{thm:domain} reveals.
\end{proof}

Note that it is necessary to use the centred maximal function here if we talk about cubes. Consider a domain that is a wide square at the end of a narrow corridor. A smooth bump in the middle of the square is a Sobolev function. However, its non-centred maximal function necessarily has a jump when we move from the corridor to the square. It is zero in the corridor, but when it enters the the square, it suddenly exceeds some fixed positive threshold value.

\section{The fractional maximal function}
In this concluding section, we make some remarks on how the method applies to the fractional maximal function. We define the non-centred fractional maximal function to be
\[M_\beta f = \sup_{Q \ni x} \ell(Q)^{\beta} \dashint_{Q} |f| \dx , \quad \beta \in [0,n]. \]
We will next inspect how the local part of the proof (Section \ref{sec:lopa}) behaves with the fractional maximal function. 

Take a cube $Q_0$ and a triple $(u,\mu,\alpha)$ as in the premises of Theorem \ref{theorem:preserved} except that now we assume that $M_{\beta}u \in L^{1}_{loc}(\mathbb{R}^{n})$ in addition to $M u \in L^{1}_{loc}(\mathbb{R}^{n})$. Let $E = Q_0 \cap \{M_{\beta}u = M_{\beta}(1_{3Q_0}u) \}$. We see that for all $x \in E$ it is clear that
\[ M_\beta u - (M_\beta u)_{Q_0} \leq M_\beta (1_{3Q_0}u) - \ell(3Q_0)^{\beta} u_{3Q_0} \lesssim \ell(Q_0)^{\beta}( M  (1_{3Q_0}u) - u_{3Q_0}  ) \]
where the rightmost quantity can be estimated as in the proof of Theorem \ref{theorem:preserved}. The outcome is that
\[\frac{1}{|Q_0|} \int_{E} (M_\beta u - (M_\beta u)_{Q_0} )^{+} \dx \lesssim \diam(Q_0)^{\alpha} \inf_{z \in Q_0} M_{\beta}\mu(z) .\]

In the non-local part (Section \ref{sec:nolopa}) the changes that occur are even more marginal. Namely, the relevant quantity 
\[(\ell(Q_1)^{\beta} u_{Q_1} - \ell(Q_2)^{\beta} u_{Q_2} )^{+} = \ell(Q_1)^{\beta}  (u_{Q_1} - u_{Q_2} )^{+} \]
reduces to the case already known modulo the factor $\ell(Q_1)^{\beta}$. This will be absorbed in the fractional maximal function appearing in the right side of the final estimate. 

Hence the proof of Theorem \ref{theorem:preserved} with the initial changes mentioned above gives the estimate 
\[ \dashint_{Q} \abs{M_\beta u -(M_\beta u)_{Q}} \dx \leq C \diam(Q)^{\alpha}  \inf _{z \in Q} M_\beta \mu (z)  \]
for the fractional maximal function provided that the minimal assumptions to make the quantities above well defined are fulfilled. One can use this inequality to derive a bound for the gradient of the fractional maximal function of a Sobolev function. This will be a repetition of the argument in \cite{Hajlasz2003} so we omit it.

\end{document}